\documentclass[reqno]{amsart}
\usepackage{amsmath}
\usepackage{amssymb}
\usepackage{amsfonts}
\usepackage{amstext}
\usepackage[cp866]{inputenc}
\usepackage[english]{babel}
\theoremstyle{plain}\newtheorem{lem}[]{Lemma}
\theoremstyle{plain}
\theoremstyle{plain}
\theoremstyle{plain}

\title[ASYMPTOTIC EXPANSION FOR DISTRIBUTION OF MARKOVIAN RANDOM MOTION]
{ASYMPTOTIC EXPANSION FOR DISTRIBUTION OF MARKOVIAN RANDOM MOTION}
\author{Anatoly A. Pogorui}
\address{Anatoly A. Pogorui, Zhitomir State University,
Velyka Berdychivska str. 40, 10008 Zhitomir, Ukraine. {\it E-mail:
pogor2@mail.ru}}

\keywords {Markov stochastic evolution, asymptotic expansion,
perturbed equation}

\begin{document}
\begin{abstract}
 In this paper we study an asymptotic expansion for the distribution
 of a random motion of a particle driven by a Markov process in diffusion
 approximation. We show that the singularly perturbed equation of a Markovian random
 motion can be reduced to the regularly perturbed equation for the distribution of
 the random motion.
\end{abstract}

\subjclass[2000]{Primary 60J25; Secondary 60J65} \maketitle

\section{Introduction}\label{s1}
The first CLT for additive functionals of a Markov chain with
noncountable phase space was proved by Doeblin \cite{db}.
Additional functionals of Markov and semi-Markov processes with
finite phase space have intensively been studied by V.S.Korolyuk,
A.F.Turbin, M.Pinsky, V.M.Shurenkov and others
\cite{kt},\cite{sh},\cite{p}.

In 1951 S. Goldstein introduced the telegrapher's stochastic
process in his seminal paper \cite{g}, which is a random motion
driven by a homogeneous Poisson process. This basic telegrapher
process has been extended by M.Kac in \cite{kc}. Goldstein-Kac's
telegraph process on the line, and its weak convergence to the
one-dimensional Brownian motion is well-known.

This paper deals with the $n$-dimensional random motion which is
an additional functional of some Markov process. This kind of
models is well known and popular in the physical literature for
the description of the long polymer molecules. For example, one of
the forms of the Airing model in \cite{k} is similar to the model
in this paper.

 Let us consider the random
motion of a particle in $\mathbb{R}^{n}$ driven by a Markov
process $\xi(t)$, which sojourn times at states are exponential
distributed with rate $\lambda >0$ and transition probabilities
$p_{ij}=\frac{1}{2n-1}\delta_{ij}$, $i,j\in E=\{1,2,\ldots,2n\}$,
where $E$ is the phase space of $\xi(t)$.

Let $\vec{b}_1,\ldots, \vec{b}_n$ be a Cartesian basis of
$\mathbb{R}^{n}$. Put $\vec{e}_{1}=\vec{b}_{1}$,
$\vec{e}_{2}=-\vec{b}_{1}$, $\vec{e}_{3}=\vec{b}_{2}$,
$\vec{e}_{4}=-\vec{b}_{2}$, ..., $\vec{e}_{2n-1}=\vec{b}_{n}$,
$\vec{e}_{2n}=-\vec{b}_{n}$ and $\vec{v}_{i}=v\vec{e}_{i}$,
$i=1,2,\ldots,2n$, where $v>0$ is constant speed of the particle.

We assume that the particle moves in $n$-dimensional space in the
following manner: If at some instant $t$ the particle has velocity
$\vec{v}_{i}$, then at a renewal moment of the Markov process the
particle takes a new velocity $\vec{v}_{j}$, $j\neq i$  with
probability $p_{ij}=\frac{1}{2n-1}$. The particle continues its
motion with velocity $\vec{v}_{j}$ until the next renewal moment
of the Markov process, and so on.

Let us denote by $\vec{r}(t)=(x_{1}(t),x_{2}(t),\ldots,x_{n}(t))$,
$t\geq 0$ the particle position at time $t$. Consider the function

$$\vec{C}(i)=(C_{1}(i),C_{2}(i),\ldots, C_{n}(i))=\vec{v}_{i},$$ $i\in E.$

Then the position of the particle at time $t$ can be expressed as
$$\vec{r}(t)=\vec{r}(0)+\int_{0}^{t}\vec{C}(\xi(t))dt.$$

\section{Equation for probability density of particle position}
 Let us consider the bivariate stochastic process
$\varsigma(t)=(\vec{r}(t),\xi(t))$ with the phase space
$\mathbb{R}^{n}\times E.$ It is well known that this process is
Markovian and the generating operator of $\varsigma(t)$ is of the
following form \cite{kt},\cite{ks}:
\begin{equation}
A\varphi(\vec{r},i)=\vec{C}(i)\varphi'(\vec{r},i)+\lambda[P\varphi(\vec{r},i)-\varphi(\vec{r},i)],
\label{one}
\end{equation}
where
$$\vec{C}(i)\varphi'(\vec{r},i)=C_{1}(i)\frac{\partial}{\partial
x_{1}}\varphi(\vec{r},i)+ C_{2}(i)\frac{\partial}{\partial
x_{2}}\varphi(\vec{r},i)+\ldots+C_{n}(i)\frac{\partial}{\partial
x_{n}}\varphi(\vec{r},i),$$ and
$P\varphi(\vec{r},i)=\frac{\lambda}{2n-1}\sum\limits_{j\in
E\setminus i}\varphi(\vec{r},i)$.

Now, let us consider the density function
\begin{eqnarray*}
f_{i}(t,x_{1},&\ldots&,x_{n})dx_{1}\ldots dx_{n}=\\
&=& P\{x_{1}\leq\ x_{1}(t)\leq x_{1}+dx_{1},\ldots, x_{n}\leq\
x_{n}(t)\leq x_{n}+dx_{n}\}
\end{eqnarray*}

It is easily verified that
$f(t,x_{1},\ldots,x_{n})=\sum\limits_{i=1}\limits^{n}f_{i}(t,x_{1},\ldots,
x_{n})$ is the probability density of the particle position in
$\mathbb{R}^n$ at time $t$.

\begin{lem}
The function $f$ satisfies the following differential equation
\begin{eqnarray*}\prod
\limits_{i=1}\limits^{2n}\{\frac{\partial}{\partial
t}+(-1)^{i}v\frac{\partial}{\partial
x_{i}}+\frac{2n\lambda}{2n-1}\}f+\\
\frac{2n\lambda}{2n-1}\sum \limits_{k=1}\limits^{2n}\prod
\limits_{i=1 \atop i\neq k}\limits^{2n}\{\frac{\partial}{\partial
t}+(-1)^{i}v\frac{\partial}{\partial
x_{i}}+\frac{2n\lambda}{2n-1}\}f=0
\end{eqnarray*}
\end{lem}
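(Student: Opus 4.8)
The plan is to pass from the backward generator \eqref{one} to the forward (Fokker--Planck) system for the densities $f_1,\dots,f_{2n}$, and then to eliminate the auxiliary components by exploiting a structural feature: every component equation has the \emph{same} right-hand side. First I would write the forward system. Since $\varsigma(t)$ is Markov with generator \eqref{one}, each $f_i$ is the sub-density of being at $\vec r$ in state $i$, and taking the formal adjoint of \eqref{one} (integration by parts sends $\vec C(i)\nabla$ to $-\vec v_i\cdot\nabla$, while the bounded jump part is transposed through the transition matrix $p_{ij}=\tfrac{1}{2n-1}$) yields, for $i=1,\dots,2n$,
\[
\frac{\partial f_i}{\partial t}+\vec v_i\cdot\nabla f_i=\frac{\lambda}{2n-1}\sum_{j\neq i}f_j-\lambda f_i .
\]
Because $\vec e_i$ selects a single coordinate with a sign, $\vec v_i\cdot\nabla$ reduces to $(-1)^i v\,\partial/\partial x_i$ in the abbreviated notation of the statement.

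Next I would introduce the total density $f=\sum_{j=1}^{2n}f_j$ and substitute $\sum_{j\neq i}f_j=f-f_i$, which turns each equation into
\[
\Big(\frac{\partial}{\partial t}+(-1)^i v\frac{\partial}{\partial x_i}+\frac{2n\lambda}{2n-1}\Big)f_i=\frac{\lambda}{2n-1}\,f .
\]
Writing $D_i$ for the first-order operator in brackets (exactly the operator appearing in the Lemma), the system becomes $D_if_i=\frac{\lambda}{2n-1}f$ for every $i$. The decisive observation is that the right-hand side is \emph{independent of $i$}, and that the $D_i$ are constant-coefficient operators, hence mutually commuting.

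The elimination is then immediate. Applying the product $\prod_{k=1}^{2n}D_k$ to the identity $f=\sum_{i}f_i$, pulling out the factor $D_i$ by commutativity, and using $D_if_i=\frac{\lambda}{2n-1}f$, one gets
\[
\Big(\prod_{k=1}^{2n}D_k\Big)f=\sum_{i=1}^{2n}\Big(\prod_{k\neq i}D_k\Big)(D_if_i)=\frac{\lambda}{2n-1}\sum_{i=1}^{2n}\Big(\prod_{k\neq i}D_k\Big)f ,
\]
which is precisely the product-plus-sum differential equation of the Lemma acting on $f$. Equivalently, one may package the argument as a determinant: the system is $M\vec f=0$ with $M=\mathrm{diag}(D_1,\dots,D_{2n})-\frac{\lambda}{2n-1}\mathbf 1\mathbf 1^{\top}$, and multiplying by the adjugate shows $(\det M)f=0$, where the matrix-determinant lemma gives $\det M=\prod_k D_k-\frac{\lambda}{2n-1}\sum_k\prod_{i\neq k}D_i$ — the same operator.

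The step I expect to be the main obstacle is the rigorous justification rather than the formal manipulation: one must establish the forward system correctly as the adjoint of \eqref{one}, know a priori that $f$ and the $f_i$ are smooth enough for a $2n$-th order operator to act and for the substitution $D_if_i=\frac{\lambda}{2n-1}f$ to be inserted inside the product, and verify that the elimination (equivalently the adjugate identity) is legitimate, which it is only because all $D_i$ commute. The remaining computational point is the constant-and-sign bookkeeping uniform in $n$; a convenient sanity check is $n=1$, where the operator collapses to $\partial_{tt}+2\lambda\partial_t-v^2\partial_{xx}$, the classical telegraph equation.
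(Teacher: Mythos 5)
Your derivation follows the same route as the paper: write the Kolmogorov system for the components $f_i$, recast it as $L_{2n}\vec f=0$ with $L_{2n}=\mathrm{diag}(D_1,\dots,D_{2n})-\frac{\lambda}{2n-1}\mathbf 1\mathbf 1^{\top}$, and eliminate the components so that $f=\sum_i f_i$ satisfies $\det(L_{2n})f=0$. The paper cites Kolesnik--Turbin for the elimination step and simply asserts the form of the determinant as ``well known''; you carry the elimination out explicitly (correctly using that the constant-coefficient operators $D_i$ commute), which makes the argument self-contained. The forward-versus-backward issue you raise is immaterial here, because the velocity set is symmetric under $\vec v\mapsto-\vec v$, so the sign of the drift term does not affect the resulting determinant.

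There is, however, a real discrepancy that you gloss over. Your elimination yields $\bigl(\prod_k D_k\bigr)f-\frac{\lambda}{2n-1}\sum_k\bigl(\prod_{i\neq k}D_i\bigr)f=0$, and you then declare this to be ``precisely'' the equation of the Lemma. It is not: the Lemma's displayed coefficient of the sum is $+\frac{2n\lambda}{2n-1}$, not $-\frac{\lambda}{2n-1}$, and the two operators are genuinely different. For instance, the Lemma's operator has the nonzero constant term $(1+2n)\bigl(\frac{2n\lambda}{2n-1}\bigr)^{2n}$, whereas the determinant must annihilate constants, since constant vectors lie in the kernel of $L_{2n}$. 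Your version is in fact the correct one: it agrees with the matrix determinant lemma, with the paper's own expanded formula (\ref{seven}) for $n=3$ (which has no zeroth-order term), and at $n=1$ it gives the telegraph operator $\partial_{tt}+2\lambda\partial_t-v^2\partial_{xx}$, while the Lemma's display gives $\partial_{tt}+8\lambda\partial_t+12\lambda^2-v^2\partial_{xx}$. So the defect lies in the Lemma's statement rather than in your argument, but a proof must not assert an identification that is visibly false; your own $n=1$ sanity check, had you compared it against the Lemma's display instead of against the telegraph equation, would have exposed the mismatch.
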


\begin{proof}
For $i\in E$ function $f_{i}$ satisfies the first Kolmogorov
equation, namely
\begin{equation}
\frac{\partial f_{i}(t,x_{1},\ldots, x_{n})}{\partial
t}=Af_{i}(t,x_{1},\ldots, x_{n}), \quad i\in E  \label{two}
\end{equation}
with initial conditions $
f_{i}(0,x_{1},\ldots,x_{n})=f_{i}^{(o)}$.

Eq.(\ref{two}) can be written in more detail as follows
\begin{eqnarray*}
\frac{\partial f_{i}(t,x_{1},\ldots, x_{n})}{\partial t}+
(-1)^{i}v\frac{\partial}{\partial x_{i}}f_{i}(t,x_{1},\ldots,
x_{n})+\lambda f_{i}(t,x_{1},\ldots, x_{n})-
\end{eqnarray*}
\begin{equation}
\frac{\lambda}{2n-1} \sum\limits_{j \in E \setminus i}
f_{i}(t,x_{1},\ldots, x_{n})=0,\quad i\in E. \label{three}
\end{equation}

Now, put $\vec{f}(t,x_{1},\ldots, x_{n})=\{f_{i}(t,x_{1},\ldots,
x_{n}), i\in E\}$. The set of equations (3) can be written in the
following form $$L_{2n}\vec{f}=0,$$ where
$L_{2n}=\{l_{ij}\}_{ij\in E},$ $l_{ii} = \frac{\partial}{\partial
t}+(-1)^{i}v\frac{\partial}{\partial x_{i}}+\lambda$, $l_{ij} =
\frac{-\lambda}{2n-1}$, $i\neq j.$

 The function
 $f$ satisfies the following equation \cite{kot}
\begin{equation}
\det (L_{2n})f=0, \label{four}
\end{equation}
with the initial condition
$f(0,x_{1},\ldots,x_{n})=\sum\limits_{k=1}\limits^{n}f_{i}^{(o)}.$

The determinant of matrix $L_{2n}$ is well known and it has the
form
\begin{eqnarray*}\det (L_{2n})=\prod
\limits_{i=1}\limits^{2n}\{\frac{\partial}{\partial
t}+(-1)^{i}v\frac{\partial}{\partial
x_{i}}+\frac{2n\lambda}{2n-1}\}+\\
\frac{2n\lambda}{2n-1}\sum \limits_{k=1}\limits^{2n}\prod
\limits_{i=1 \atop i\neq k}\limits^{2n}\{\frac{\partial}{\partial
t}+(-1)^{i}v\frac{\partial}{\partial
x_{i}}+\frac{2n\lambda}{2n-1}\}
\end{eqnarray*}\end{proof}
Since $v\frac{\partial}{\partial x_{i}} $ and
$-v\frac{\partial}{\partial x_{i}} $  appear in $L_{2n}$
symmetrically it is easy to see that all monomials of the
polynomial $\det (L_{2n})$  contain $v^{k}$ only with even powers
$k\geq 0$.

\section{Reduction of singularly perturbed evolution equation to regularly perturbed
     equation
}

Let us put $v=\varepsilon ^{-1}$ and $\lambda=\varepsilon ^{-2}$,
where $\varepsilon >0 $ is a small parameter. It is well known
\cite{ks},\cite{s},\cite{t} that the solution of Eq.(\ref{three})
in hydrodynamical limit (as $\varepsilon \rightarrow 0 $) weakly
converges to the corresponding functional of Wiener process.

By using technique developed in \cite{vb}, we can find asymptotic
expansion of the solution of Eq.(\ref{three}), which consists of
regular and singular terms. This technique involves tedious
calculations \cite{kt},\cite{s}.

{\bf Proposition.} {\it The equation $\det (L_{2n})f=0$ is regular
perturbed, that is, multiplying it by $\varepsilon^{4n-2}$, we get
\begin{equation*}
\frac{\partial}{\partial t}f=\frac{2n-1}{2n^{2}}(\frac{\partial
^{2}}{\partial x_{1}^{2}}+\ldots +\frac{\partial ^{2}}{\partial
x_{n}^{2}})f+D_{\varepsilon}f,
\end{equation*}
where $D_{\varepsilon}=\varepsilon ^{2}D_{1}+\varepsilon
^{4}D_{2}+\ldots$, $D_{i},$ $i=1,2,\ldots,$ are respective
differential operators.}

\begin{proof}
To avoid cumbersome expressions, we consider the case when $n=3 $.
Let us put $x=x_{1}$, $y=x_{2}$, $z=x_{3}$. In this case
Eq.(\ref{three}) has the following form
\begin{equation}
\frac{\partial f_{i}(t,x,y,z)}{\partial t}+
(-1)^{i}v\frac{\partial}{\partial x}f_{i}(t,x,y,z)+\lambda
f_{i}(t,x,y,z)-\frac{\lambda}{5}\sum\limits_{j\in E \atop j\neq
i}f_{i}(t,x,y,z), \label{five}
\end{equation}
$i=1,\ldots,6$.

Putting  $v=\varepsilon ^{-1}$ and $\lambda=\varepsilon ^{-2}$, we
obtain the following singularly perturbed system of equations
\begin{eqnarray*}
\frac{\partial f_{i}(t,x,y,z)}{\partial t}+ (-1)^{i}\varepsilon
^{-1}\frac{\partial}{\partial x}f_{i}(t,x,y,z)+\varepsilon ^{-2}
f_{i}(t,x,y,z)-
\end{eqnarray*}
\begin{eqnarray}
\varepsilon ^{-2}\frac{1}{5}\sum\limits_{j\in E\atop j\neq
i}f_{i}(t,x,y,z)=0, \label{six}
\end{eqnarray}
$i=1,\ldots,n.$

Let us consider the equation $\det(L_{6})f=0,$ where
$f(t,x,y,z)=\sum\limits_{i=1}\limits^{6}f_{i}(t,x,y,z).$  It is
easy to see that elements of matrix $L_{6}=(l_{ij})_{i,j\in E}$
are as follows: $ l_{ii}= \frac{\partial}{\partial
t}+(-1)^{i}v\frac{\partial}{\partial t}+\lambda$ and
$l_{ij}=\frac{\lambda}{5}$, for $i\neq j$, $i,j\in
\{1,2,\ldots,6\}.$

Hence, the equation $\det(L_{6})f=0$ has the following form
$$
\det(L_{6})f(t,x,y,z)=\{\frac{7776}{3125}\varepsilon^{-10}
\frac{\partial}{\partial t}-\frac{432}{625}\varepsilon^{-10}
\left( \frac{\partial^{2}}{\partial
x^{2}}+\frac{\partial^{2}}{\partial
y^{2}}+\frac{\partial^{2}}{\partial z^{2}} \right)-$$
$$\frac{432}{125}\varepsilon^{-8} \left(
\frac{\partial^{2}}{\partial x^{2}}+\frac{\partial^{2}}{\partial
y^{2}}+\frac{\partial^{2}}{\partial z^{2}} \right)
\frac{\partial}{\partial t} -\frac{144}{25}\varepsilon^{-6} \left(
\frac{\partial^{2}}{\partial x^{2}}+\frac{\partial^{2}}{\partial
y^{2}}+\frac{\partial^{2}}{\partial z^{2}}
\right)\frac{\partial^{2}}{\partial t^{2}}+$$

$$\frac{1296}{125}\varepsilon^{-2}\frac{\partial^{2}}{\partial
t^{2}}+\frac{24}{25}\varepsilon^{-8} \left(
\frac{\partial^{4}}{\partial x^{2}\partial
y^{2}}+\frac{\partial^{2}}{\partial y^{2}\partial
z^{2}}+\frac{\partial^{2}}{\partial x^{2}\partial
z^{2}}\right)+\frac{72}{5}\varepsilon^{-4}\frac{\partial^{4}}{\partial
t^{4}}-$$ $$4\varepsilon^{-4}\left( \frac{\partial^{2}}{\partial
x^{2}}+\frac{\partial^{2}}{\partial
y^{2}}+\frac{\partial^{2}}{\partial z^{2}}
\right)\frac{\partial^{3}}{\partial t^{3}}+2\varepsilon^{-6}
\left( \frac{\partial^{4}}{\partial x^{2}\partial
y^{2}}+\frac{\partial^{2}}{\partial y^{2}\partial
z^{2}}+\frac{\partial^{2}}{\partial x^{2}\partial
z^{2}}\right)\frac{\partial}{\partial t}+$$
$$6\varepsilon^{-2}\frac{\partial^{5}}{\partial t^{5}}+
\varepsilon^{-8}\left( \frac{\partial^{4}}{\partial x^{2}\partial
y^{2}}+\frac{\partial^{2}}{\partial y^{2}\partial
z^{2}}+\frac{\partial^{2}}{\partial x^{2}\partial
z^{2}}\right)\frac{\partial^{2}}{\partial t^{2}}-
\varepsilon^{-6}\frac{\partial^{6}}{\partial x^{2}\partial
y^{2}\partial z^{2}}-$$

\begin{eqnarray}
\varepsilon^{-2}\left( \frac{\partial^{2}}{\partial
x^{2}}+\frac{\partial^{2}}{\partial
y^{2}}+\frac{\partial^{2}}{\partial z^{2}}
\right)\frac{\partial^{4}}{\partial
t^{4}}+\frac{\partial^{6}}{\partial
t^{6}}+\frac{432}{25}\varepsilon^{-6}\frac{\partial^{3}}{\partial
t^{3}}\}f(t,x,y,z)=0, \label{seven}
\end{eqnarray}
with the initial condition $f(0,x,y,z)=f_{0}.$

Multiplying Eq.(\ref{seven}) by $\varepsilon^{10} $, we obtain
$$
\{\frac{\partial}{\partial
t}-\frac{5}{18}\Delta+\varepsilon^{2}\frac{25}{6}\left[\frac{\partial^{2}}{\partial
t^{2}}-\frac{1}{3}\Delta\frac{\partial}{\partial
t}+\frac{5}{54}\Delta^{(2)}\right]+
$$

$$
\varepsilon^{4}\frac{125}{18}\left[\frac{\partial^{3}}{\partial
t^{3}}-\frac{1}{3}\Delta\frac{\partial^{2}}{\partial
t^{2}}+\frac{5}{216}\Delta^{(2)}\frac{\partial}{\partial
t}\right]$$

$$+\varepsilon^{6}\frac{625}{216}\left[\frac{\partial^{4}}{\partial
t^{4}}-\frac{5}{18}\Delta\frac{\partial^{3}}{\partial
t^{3}}+\frac{5}{72}\Delta^{(2)}\frac{\partial^{2}}{\partial
t^{2}}\right]+
$$

\begin{equation}
\varepsilon^{8}\frac{3125}{1296}\left[\frac{\partial^{5}}{\partial
t^{5}}-\frac{1}{6}\Delta^{(2)}\frac{\partial^{4}}{\partial
t^{4}}\right]+
\varepsilon^{10}\frac{3125}{7776}\frac{\partial^{6}}{\partial
t^{6}}\}f=0, \label{eight}
\end{equation}
\\
where $\Delta = \frac{\partial^{2}}{\partial
x^{2}}+\frac{\partial^{2}}{\partial
y^{2}}+\frac{\partial^{2}}{\partial z^{2}}$, $\Delta^{(2)} =
\frac{\partial^{4}}{\partial x^{2}\partial
y^{2}}+\frac{\partial^{4}}{\partial y^{2}\partial
z^{2}}+\frac{\partial^{4}}{\partial x^{2}\partial z^{2}}.$

\end{proof}

\begin{lem} The solution of equation (\ref{eight}) with the initial condition
$f(0,x,y,z)=u_{0}(0,x,y,z)+\varepsilon^{2}u_{1}(0,x,y,z)+
\varepsilon^{4}u_{2}(0,x,y,z)+\ldots$, has asymptotic expansion
\begin{equation}
f(t,x,y,z)=u_{0}(t,x,y,z)+\varepsilon^{2}u_{1}(t,x,y,z)+
\varepsilon^{4}u_{2}(t,x,y,z)+\ldots, \label{nine}
\end{equation}
where the principal term $u_{0}(t,x,y,z)$ represents the solution
of equation
$$
\frac{\partial}{\partial t}u_{0}(t,x,y,z)=\frac{5}{18}\Delta
u_{0}(t,x,y,z).$$

\end{lem}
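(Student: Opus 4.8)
The plan is to treat equation~(\ref{eight}) as a regular perturbation problem and to build the coefficients $u_k$ recursively by the classical power-series method. First I would rewrite the operator on the left of~(\ref{eight}) as $\mathcal{L}_\varepsilon=\sum_{k=0}^{5}\varepsilon^{2k}L_k$, where
\[
L_0=\frac{\partial}{\partial t}-\frac{5}{18}\Delta
\]
is the leading parabolic operator and $L_1,\dots,L_5$ are the differential operators collected in the successive square brackets, for instance $L_1=\frac{25}{6}\bigl[\frac{\partial^2}{\partial t^2}-\frac13\Delta\frac{\partial}{\partial t}+\frac{5}{54}\Delta^{(2)}\bigr]$, and analogously for $L_2,\dots,L_5$. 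The structural feature that makes this a regular (rather than singular) perturbation, already secured by the Proposition, is that only nonnegative even powers of $\varepsilon$ occur, so the genuine expansion parameter is $\varepsilon^2$.

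Substituting the ansatz $f=\sum_{m\gs 0}\varepsilon^{2m}u_m$ from~(\ref{nine}) into $\mathcal{L}_\varepsilon f=0$ and equating the coefficient of $\varepsilon^{2N}$ to zero yields the chain
\[
L_0 u_N=-\sum_{k=1}^{\min(5,N)}L_k u_{N-k},\qquad N=0,1,2,\dots,
\]
where the empty sum at $N=0$ reduces to $L_0u_0=0$, that is, the principal heat equation $\frac{\partial}{\partial t}u_0=\frac{5}{18}\Delta u_0$ claimed in the statement. For $N\gs 1$ this is an inhomogeneous heat equation for $u_N$ whose source depends only on the previously determined $u_0,\dots,u_{N-1}$. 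Supplying the prescribed initial datum $u_N(0,x,y,z)$ read off from the expansion of $f(0,x,y,z)$, each Cauchy problem is uniquely solvable (for example by Duhamel's principle against the Gaussian kernel of $L_0$), so the recursion determines every coefficient in turn.

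The step I expect to be the main obstacle is making the right-hand side of the $N$-th equation an honestly computable function of $t$ and the space variables. The operator $L_k$ with $k\gs 1$ carries time derivatives of order up to $k+1$, so a priori $L_k u_{N-k}$ involves $\frac{\partial^{j}}{\partial t^{j}}u_{N-k}$ with $j>1$. The remedy is that each lower-order term already satisfies its own relation $\frac{\partial}{\partial t}u_{N-k}=\frac{5}{18}\Delta u_{N-k}+g_{N-k}$ with $g_{N-k}$ known, so every time derivative of $u_{N-k}$ can be traded for spatial derivatives of $u_{N-k}$ and of the sources $g$ by repeatedly differentiating this relation in $t$. After this reduction the source at level $N$ becomes a purely spatial differential operator applied to already-known terms, which is precisely what lets the recursion close. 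This mechanism is the one that tames the singular appearance of the high-order time derivatives in~(\ref{eight}), and it confirms that~(\ref{nine}) is the correct regular asymptotic series whose leading term obeys the stated heat equation.
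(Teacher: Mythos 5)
Your proposal follows essentially the same route as the paper: substitute the series (\ref{nine}) into (\ref{eight}), equate coefficients of like powers of $\varepsilon^{2}$, and obtain the recursive chain of heat equations with source terms built from the preceding coefficients, the zeroth-order equation being $\frac{\partial}{\partial t}u_{0}=\frac{5}{18}\Delta u_{0}$. Your additional remarks on solvability via Duhamel's principle and on trading higher time derivatives for spatial ones are sound refinements of the same argument (and your signs in the recursion are the correct ones), so no substantive difference from the paper's proof.
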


\begin{proof}
To find the asymptotic expansion of the solution of (\ref{eight})
we use the method proposed in \cite{vb} and developed in
\cite{kt}. In conformity with this method the solution of
(\ref{eight}) can be expanded into the series (\ref{nine}), where
$\varepsilon>0$ is small.

Substituting (\ref{nine}) into (\ref{eight}), we get the following
equations for computing of $u_{i}$, $i\geq 0$
$$
\frac{\partial}{\partial t}u_{0}(t,x,y,z)=\frac{5}{18}\Delta
u_{0}(t,x,y,z);$$

$$
\frac{\partial}{\partial t}u_{1}(t,x,y,z)=\frac{5}{18}\Delta
u_{1}(t,x,y,z)+$$
$$\varepsilon^{2}\frac{25}{6}\left[\frac{\partial^{2}}{\partial
t^{2}}-\frac{1}{3}\Delta\frac{\partial}{\partial
t}+\frac{5}{54}\Delta^{(2)}\right]u_{0}(t,x,y,z);$$

$$
\frac{\partial}{\partial t}u_{2}(t,x,y,z)=\frac{5}{18}\Delta
u_{2}(t,x,y,z)+$$
$$\varepsilon^{2}\frac{25}{6}\left[\frac{\partial^{2}}{\partial
t^{2}}-\frac{1}{3}\Delta\frac{\partial}{\partial
t}+\frac{5}{54}\Delta^{(2)}\right]u_{1}(t,x,y,z)+$$

$$\frac{125}{18}\left[\frac{\partial^{3}}{\partial
t^{3}}-\frac{1}{3}\Delta\frac{\partial^{2}}{\partial
t^{2}}+\frac{5}{216}\Delta^{(2)}\frac{\partial}{\partial
t}\right]u_{0}(t,x,y,z);$$

$$\ldots$$

$$
\frac{\partial}{\partial t}u_{m+5}(t,x,y,z)=\frac{5}{18}\Delta
u_{m+5}(t,x,y,z)+
$$

$$\frac{25}{6}\left[\frac{\partial^{2}}{\partial
t^{2}}-\frac{1}{3}\Delta\frac{\partial}{\partial
t}+\frac{5}{54}\Delta^{(2)}\right]u_{m+4}(t,x,y,z)+
$$

$$\frac{125}{18}\left[\frac{\partial^{3}}{\partial
t^{3}}-\frac{1}{3}\Delta\frac{\partial^{2}}{\partial
t^{2}}+\frac{5}{216}\Delta^{(2)}\frac{\partial}{\partial
t}\right]u_{m+3}(t,x,y,z)+$$

$$\frac{625}{216}\left[\frac{\partial^{4}}{\partial
t^{4}}-\frac{5}{18}\Delta\frac{\partial^{3}}{\partial
t^{3}}+\frac{5}{72}\Delta^{(2)}\frac{\partial^{2}}{\partial
t^{2}}\right]u_{m+2}(t,x,y,z)+$$

$$\frac{3125}{1296}\left[\frac{\partial^{5}}{\partial
t^{5}}-\frac{1}{6}\Delta^{(2)}\frac{\partial^{4}}{\partial
t^{4}}\right]u_{m+1}(t,x,y,z)+\frac{3125}{7776}\frac{\partial^{6}}{\partial
t^{6}}u_{m}(t,x,y,z)=0,$$ for $m\geq 0.$
\end{proof}

Let us consider the function
$$\widetilde{f}_{k}^{(\varepsilon)}(t,x,y,z)=u_{0}(t,x,y,z)+\varepsilon^{2}u_{1}(t,x,y,z)+
\ldots+\varepsilon^{2k}u_{k}(t,x,y,z).$$

In \cite{s}  for the solution of a singularly perturbed equation
of type (\ref{three}) the remainder of asymptotic expansion in the
circuit of diffusion approximation was studied.

Taking into account that $f(t,x,y,z)=\sum \limits_{i\in E}
f_{i}(t,x,y,z)$, it follows from the estimate of the reminder in
\cite{kt},\cite{aks} that
$$\parallel f(t,x,y,z)-\widetilde{f}^{(\varepsilon)}(t,x,y,z)\parallel =
O(\varepsilon^{2k})$$

\section{Conclusions}
Singularly perturbed equations of type (\ref{three}) in
hydrodynamical limit (where $\frac{\lambda}{v^{2}}=O(1), \lambda
\downarrow 0$) have become the subject of a great deal of
researches \cite{kc},\cite{kt},\cite{p},\cite{s},\cite{t} and
others. By using the technique of professor A.F.Turbin \cite{kot},
we reduce singularly perturbed system of equations (\ref{three})
to Eq.(\ref{four}), which turns out to be regularly perturbed in
hydrodynamical limit to diffusion process.

Therefore, in such cases we may simplify cumbersome calculations
of terms of asymptotic expansion for solution of singulary
perturbed equations for functionals of Markovian random motion.

\vspace{0.5 cm}

{\bf Acknowledgements} I wish to thank an anonymous referee for
careful reading of the paper and insightful comments and remarks.


\end{document}